\documentclass[11pt,a4paper]{amsart}
\usepackage[margin=30mm]{geometry}
\usepackage{amsmath,amssymb,amsthm,mathtools}
\usepackage{enumitem}
\usepackage{xcolor}
\usepackage{hyperref}
\usepackage{tikz}
\hypersetup{colorlinks=true,linkcolor=blue,citecolor=blue,urlcolor=blue}

\newtheorem{theorem}{Theorem}[section]
\newtheorem{proposition}[theorem]{Proposition}
\newtheorem{lemma}[theorem]{Lemma}
\newtheorem{corollary}[theorem]{Corollary}

\theoremstyle{definition}
\newtheorem{definition}[theorem]{Definition}

\theoremstyle{remark}
\newtheorem{remark}[theorem]{Remark}

\newcommand{\RR}{\mathbb R}
\newcommand{\Tc}{{\mathcal T}}
\newcommand{\Fc}{{\mathcal F}}
\newcommand{\conv}{\operatorname{conv}}

\newcommand{\eb}{{\bf e}}

\title[Minkowski decomposability of symmetric edge polytopes]
{Minkowski decomposability of symmetric edge polytopes}

\author{Akihiro Higashitani}
\address{Department of Pure and Applied Mathematics, Graduate School of Information Science and Technology, Osaka University, Suita, Osaka 565-0871, Japan}
\email{higashitani@ist.osaka-u.ac.jp}

\author{Aki Mori}
\address{Center for Physics and Mathematics, Institute for Liberal Arts and Science, Osaka Electro-communication University, Neyagawa, Osaka 572-8530, Japan}
\email{a-mori@osakac.ac.jp}

\begin{document}
\begin{abstract}
In this paper, we study the Minkowski decomposability of symmetric edge polytopes $P_G^\pm$ of a finite simple graph $G$ on vertex set $[n]$. 
More precisely, we give a complete characterization of graphs whose symmetric edge polytopes are Minkowski decomposable. 
We prove that $P_G^\pm$ is Minkowski decomposable if and only if 
$G$ is one of the three complete multipartite graphs: $K_n$, $K_{2,n-2}$, or $K_{1,1,n-2}$. 
In other words, if $G$ does not belong to these three families, then $P_G^\pm$ is Minkowski indecomposable. 
\end{abstract}

\maketitle

\section{Introduction}

The {\emph{Minkowski sum} of two convex polytopes $Q,R\subset \RR^d$ is
\[
  Q+R=\{q+r:q\in Q,\ r\in R\}.
\]
A polytope $P$ is called \emph{Minkowski decomposable} if $P=Q+R$ for some polytopes $Q$ and $R$ neither of which is homothetic to $P$, 
where we say that two polytopes are homothetic if they are related via translation and positive dilations. 
Otherwise $P$ is called \emph{Minkowski indecomposable}. 
Minkowski decompositions are one of the basic operations in convex geometry. 
They also appear, with different terminology, in related areas: the cone of summands is closely related to \emph{type cones} in convex geometry, 
\emph{nef cones} in algebraic geometry, and \emph{deformation cones} in geometric combinatorics. 
The extremal rays of such cones correspond to homothety classes of Minkowski indecomposable polytopes, 
and recognizing such polytopes is a fundamental structural problem in several areas. 

Regarding Minkowski indecomposability of polytopes, a classical starting point is due to Shephard~\cite{Shephard1963}, 
who developed criteria in terms of strong chains of Minkowski indecomposable faces. 
McMullen later gave a useful weakening of Shephard's criterion, involving strongly connected families of Minkowski indecomposable faces touching all facets~\cite{McM}, 
which is the criterion we use in the proof of our main theorem. 
These criteria are particularly effective when many low-dimensional faces are already known to be Minkowski indecomposable, 
for example when suitable triangular faces can be organized into a strongly connected family. 
More recently, Minkowski indecomposability has been studied from a more systematic viewpoint through deformation cones and edge-deformation relations. 
A major recent advance in this direction is the work of Padrol and Poullot~\cite{PP}, who introduced the graph of implicit edge dependencies. 
Their framework unifies and extends many previous indecomposability criteria and also applies beyond indecomposability. 
A powerful application of this framework is the study of indecomposability of $0/1$-polytopes provided in ~\cite{HPS}. 
In particular, it was proved in~\cite{HPS} that every $0/1$-polytope decomposes uniquely as a Cartesian product of proper indecomposable $0/1$-polytopes. 
This paper is motivated by the same general question, but for a different and very active class of lattice polytopes: symmetric edge polytopes. 
Unlike $0/1$-polytopes, the entries of their vertices are given by $-1,0$ or $1$. 

Let $G$ be a finite simple graph on vertex set $[n]:=\{1,\ldots,n\}$. 
We regard $G$ as a directed graph having two orientations $ij$ and $ji$ for each edge $\{i,j\} \in E(G)$. 
Let \[  \rho(ij)=\eb_i-\eb_j, \]
where $\eb_1,\ldots,\eb_n$ are the unit vectors of $\RR^n$. 
Thus, $\rho(ji)=-\rho(ij)$.  The \emph{symmetric edge polytope} of $G$ is
\[  P_G^{\pm}=\conv\{\rho(ij): ij \text{ is a directed edge of }G\}  =\conv\{\pm(\eb_i-\eb_j):\{i,j\}\in E(G)\}.\]
This polytope lies in the hyperplane $\{x_1+\cdots+x_n=0\} \subset \RR^n$. 
It is known that the symmetric edge polytope of a connected graph with $n$ vertices is a  $(n-1)$-dimensional polytope.
Symmetric edge polytopes have become a very active class in Ehrhart theory and related areas of lattice polytope theory. 
Their reflexivity, unimodular triangulations, and arithmetic properties have been studied from several viewpoints; see, e.g.,~\cite{HJM,OT,DJKV}. 
In particular, Ohsugi--Tsuchiya formulated a conjecture on the $\gamma$-positivity for symmetric edge polytopes~\cite{OT}, 
and D'Al{\`i}--Juhnke-Kubitzke--K{\"o}hne--Venturello obtained several partial results~\cite{DJKV}. 
Recently, however, Ferroni disproved this conjecture by constructing an infinite family of counterexamples~\cite{F}. 
In a different direction, Codenotti--Riccardi--Venturello studied the number of edges of symmetric edge polytopes 
and characterized the graphs attaining the sharp lower bound~\cite{CRV}. 
Moreover, the facets of symmetric edge polytopes admit a rich graph-theoretic description. 
In particular, their supporting functions can be described in terms of integer-valued functions on the vertex set of the underlying graph, 
and the corresponding facet subgraphs have been studied in detail~\cite{HJM,CDK}. 

A graph $G$ is called a \emph{complete multipartite graph} if its vertex set can be partitioned into nonempty independent sets 
\[ V(G)=V_1\sqcup\cdots\sqcup V_r \]
such that two distinct vertices are adjacent if and only if they belong to different parts. We write $G=K_{a_1,\ldots,a_r}$, where $a_i=|V_i|$. 

Our main result is the following classification.
\begin{theorem}\label{thm:main}
Let $G$ be a connected graph with $n\ge 3$ vertices. Then $P_G^{\pm}$ is Minkowski decomposable if and only if 
\[G\cong K_n,\qquad G\cong K_{2,n-2},\qquad\text{or}\qquad G\cong K_{1,1,n-2}.\]
\end{theorem}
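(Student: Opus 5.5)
The proof has two directions. For the "if" direction I will write down explicit decompositions. For the "only if" direction I will use McMullen's criterion: a polytope is indecomposable if it has a strongly connected family of indecomposable faces (for us, triangles) whose union meets every facet.

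\emph{Decomposable cases.} For $K_n$, $P_{K_n}^\pm=\conv\{\eb_i-\eb_j: i\neq j\}$. Since $\Delta-\Delta=\{x-y:x,y\in\Delta\}$ for the simplex $\Delta=\conv\{\eb_1,\dots,\eb_n\}$, and a polytope of the form $\conv(V-W)$ coincides with $\Delta+(-\Delta)$ here (the vertices $\eb_i-\eb_i=0$ are interior), we get $P_{K_n}^\pm=\Delta+(-\Delta)$. For $n\ge3$, neither $\Delta$ nor $-\Delta$ is homothetic to $P_{K_n}^\pm$.

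For $K_{2,n-2}$, take parts $\{1,2\}$ and $B$. The edge vectors are $\pm(\eb_a-\eb_b)$ with $a\in\{1,2\}$ and $b\in B$. I will check that $P^\pm=\conv\{\eb_1,\eb_2\}+\conv\{-\eb_b:b\in B\}$ together with its negative is symmetrized correctly; more precisely I expect
\[
P^\pm=[\,\eb_1-\eb_2\text{-segment-type summand}\,]+\cdots .
\]
Concretely, the vertices $\pm(\eb_a-\eb_b)$ suggest the form $\conv(S\cup -S)$ with $S=\conv\{\eb_1,\eb_2\}-\conv\{\eb_b\}$. To get a genuine Minkowski decomposition I will compute the facet normals (functions $f:[n]\to\mathbb Z$ as in \cite{HJM}) and produce a segment summand, e.g.\ $[0,\eb_1-\eb_2]$ up to translation, verified via the deformation-cone condition on edges. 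For $K_{1,1,n-2}$ I will treat the graph as $K_{2,n-2}$ plus the edge $12$ and perform a similar direct facet computation to exhibit a summand. I regard these constructions as routine once the facets are listed.

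\emph{Indecomposable cases.} The triangles in $P_G^\pm$ are the faces $\conv\{\eb_i-\eb_j,\eb_j-\eb_k,\eb_i-\eb_k\}$ arising from triangles of $G$, and, more usefully, the 2-faces coming from directed paths. Since triangles are indecomposable, my strategy is as follows.

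\begin{enumerate}
\item Describe the edges of $P_G^\pm$: two vertices $\rho(ij)$ and $\rho(kl)$ span an edge when there is a facet function $f$ with $f(i)-f(j)=f(k)-f(l)=1$, i.e.\ roughly when $\{i,j\}$ and $\{k,l\}$ do not form a "cycle-type" obstruction.
\item Locate many triangular 2-faces $\conv\{\rho(ij),\rho(jk),\rho(lm)\}$, etc. These correspond to compatible orientations of small subgraphs.
\item Since every vertex of $P_G^\pm$ lies in some facet, and every facet contains a vertex, I will show that the triangles cover all vertices and are connected through shared edges. Touching every facet then follows once every vertex is covered.
\end{enumerate}

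An alternative, possibly cleaner, route is the Padrol--Poullot edge-dependency graph. Edges of a triangle are forced to deform proportionally, and I will show that all edges of $P_G^\pm$ lie in one dependency class.

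The main obstacle is step 3 for general $G$, especially sparse graphs such as trees and cycles. Trees give a cross-polytope-like product, and for trees $P_G^\pm$ is a free sum of segments, hence simplicial and indecomposable. Bipartite graphs, where $G$ itself has no triangles, are another hard case. I would first handle the sparse case via "2-paths": $\rho(ij)$ and $\rho(jk)$ form an edge, and $\rho(ij),\rho(jk),\rho(lk)$-type configurations give triangles. Then I will argue by induction on edges, adding an edge and checking that the new vertices attach to the existing strongly connected family.

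The exceptional graphs are exactly those in which some pair of opposite vertices only occurs in square 2-faces, namely $\conv\{\rho(ab),\rho(ab'),\rho(a'b),\rho(a'b')\}$, and nowhere in a triangle. Pinning down that this failure happens only for the three families is where the case analysis, on the non-edges of $G$ and on its complement structure, will be heaviest.
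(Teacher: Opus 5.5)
There is a genuine gap. Your ``only if'' direction is a programme rather than a proof, and most of its concrete claims are incorrect.

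\emph{What you stated that fails.}
\begin{enumerate}
\item The triangle $\conv\{\rho(ij),\rho(jk),\rho(ik)\}$ coming from a triangle of $G$ is never a face. A functional equal to $1$ on $\rho(ij)$ and $\rho(jk)$ takes the value $2$ on $\rho(ik)=\rho(ij)+\rho(jk)\in P_G^{\pm}$.
\item Your ``2-path'' triangles are not generally available. $\rho(ij)$ and $\rho(jk)$ span an edge only if $ik\notin E(G)$ and $j$ is the only common neighbour of $i$ and $k$.
\item Induction on edges cannot work as described. Faces are destroyed when edges are added: $\rho(12),\rho(23)$ span an edge for the path $1$--$2$--$3$ but not after adding $\{1,3\}$. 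Moreover, by your own remark a tree on $n\ge 4$ vertices gives an indecomposable polytope, while $K_n$ gives a decomposable one. So some inductive step must fail, and your outline gives no hypothesis that detects where.
\item Your diagnosis of the exceptional graphs is wrong. In $K_n$ with $n\ge 4$, the triangular faces are exactly the out-stars $\{ij,ik,il\}$ and the in-stars $\{ji,ki,li\}$, and every vertex and every edge of $P^{\pm}_{K_n}$ lies in one of them. However, two stars with different centres or opposite orientations share at most one vertex, so the triangles split into $2n$ strong components. The obstruction is disconnection of the triangle family, not failure to cover a vertex or edge. The case analysis you defer is therefore aimed at the wrong target.
\end{enumerate}

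\emph{What is missing.}
\begin{enumerate}
\item An exact criterion for triangular faces (Proposition~\ref{prop:triangles}). A triple $T$ spans a triangular face iff it has no opposite pair, no two of its edges lie on a common directed $3$- or $4$-cycle, and the three edges do not lie on a common directed $5$- or $6$-cycle. Sufficiency comes from solvability of a system of difference constraints, i.e.\ absence of negative cycles.
\item A connectivity mechanism. Fix $u$ of degree $\ge 3$. The in-star triples $\{xu,yu,zu\}$ form one connected family and the out-star triples $\{ux,uy,uz\}$ another. Strong connectivity then reduces to a single chain of valid triples from $\{au,bu,cu\}$ to $\{ua,ub,uc\}$. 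Such chains come from the explicit switches of Lemma~\ref{lem:local-switches}. Proposition~\ref{prop:transition-classification} shows that a $2$-connected non-cycle graph in which no vertex admits such a chain is $K_n$, $K_{2,n-2}$ or $K_{1,1,n-2}$.
\item Touching facets does not require covering all vertices. The directed edges of any facet form a connected spanning subgraph of $G$, so every facet contains some $\rho(e)$ with $e$ incident to $u$.
\item Separate cases you leave open. Cycles need their own argument, since they have no vertex of degree $\ge 3$. Non-$2$-connected graphs are handled via the free-sum structure: facets are joins, hence indecomposable. You only treat trees.
\end{enumerate}

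\emph{The ``if'' direction.} Your $K_n$ decomposition is fine, and your guessed segment summand is the right one. However, $\conv\{\eb_1,\eb_2\}+\conv\{-\eb_b\}$ only yields the vertices $\eb_a-\eb_b$, so it is not a decomposition of $P^{\pm}$. Instead, check by adding vertices that $P^{\pm}_{K_{2,n-2}}=\conv\{\eb_1-\eb_b,\ \eb_b-\eb_2: b\in B\}+\conv\{\mathbf{0},\eb_2-\eb_1\}$. The same works for $K_{1,1,n-2}$ with $\mathbf{0}$ and $\eb_1-\eb_2$ added to the first summand.
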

Note that $P_G^\pm$ is Minkowski indecomposable if $G$ is not $2$-connected by Corollary~\ref{cor:2-conn}, 
so we focus on the case of $2$-connected graphs for the proof of the ``only if'' part of Theorem~\ref{thm:main}.

For the three exceptional families, we give explicit Minkowski decompositions in Section~\ref{sec:exceptional}. 
For all other $2$-connected graphs, the proof uses McMullen's criterion (Theorem~\ref{thm:mcmullen}). 
The main combinatorial discussion is a series of transitions for triangular faces of $P_G^{\pm}$. 
Roughly speaking, unless $G$ is one of the three exceptional families, there is a strongly connected family of triangular faces which touches every facet of $P_G^{\pm}$. 
Since triangles are Minkowski indecomposable, McMullen's criterion implies the Minkowski indecomposability of $P_G^{\pm}$. 

\begin{remark}\label{rem:CRV-connection}
The three exceptional graphs in Theorem~\ref{thm:main} coincide exactly with the graphs appearing in the main result of Codenotti--Riccardi--Venturello~\cite[Corollary~4.3]{CRV}. 
They prove that, for a connected graph $G$, the sharp lower bound 
\begin{align}\label{eq:lowerbound} f_1(P_G^{\pm})\ge |E(G)|(2|V(G)|-5)-|E_3(G)| \end{align}
is attained if and only if $G\cong K_n$, $G\cong K_{2,n-2}$, or $G\cong K_{1,1,n-2}$, 
where $f_1(P)$ denotes the number of edges of a polytope $P$ and $E_3(G):=\{e \in E(G) : e \text{ belongs to a $3$-cycle in }G\}$. 
Namely, $P_G^\pm$ is Minkowski decomposable if and only if $G$ satisfies the equality of \eqref{eq:lowerbound}. 
This coincidence appears to reflect a common underlying mechanism. 
The counting of the edges is governed by the one-dimensional face structure of $P_G^\pm$: 
two vertices of $P_G^\pm$ form an edge precisely when the corresponding directed edges of $G$ do not lie on a common directed cycle of length $3$ or $4$. 
In our paper, the triangular faces are controlled by the same pairwise condition, 
together with the additional exclusion of directed cycles of length $5$ or $6$ containing all three directed edges. 
Hence their result concerns the number of ``safe pairs'', while our proof uses the transition connectedness of the corresponding ``safe triples''. 
\end{remark}

\medskip

We describe a brief structure of this paper. 
In Section~\ref{sec:triangles}, we characterize the triangular faces of $P_G^{\pm}$ in terms of directed edges of $G$. 
In Section~\ref{sec:criterion}, we recall McMullen's criterion for Minkowski indecomposability and introduce the strongly connected families of triangular faces used in the proof. 
In Section~\ref{sec:transition}, we prove the transition classification for $2$-connected graphs. 
In Section~\ref{sec:exceptional}, we give explicit Minkowski decompositions for the three exceptional families. 
Finally, in Section~\ref{sec:proof-main}, we give a proof of Theorem~\ref{thm:main}. 

\medskip

\section*{Acknowledgments}
The first author was partially supported by JSPS KAKENHI Grant Numbers JP24K00521 and JP24K00534. 
The second author was supported by JSPS KAKENHI Grant Number JP26K17026.

\bigskip
\section{Triangular faces of symmetric edge polytopes}\label{sec:triangles}

In this section, we provide a characterization of the triangular faces of $P_G^{\pm}$ in terms of directed edges of $G$.
We use the following elementary observation, which is useful for analyzing not only $2$-dimensional  but also higher-dimensional faces of $P_G^\pm$.  
\begin{lemma}\label{lem:cycle-face-obstruction}
Let $F$ be a proper face of $P_G^{\pm} \subset \RR^n$, and let $C=(v_1,v_2,\ldots,v_\ell,v_1)$ be a directed cycle in $G$. 
Let \[E_F(C)=\{v_i v_{i+1}:\rho(v_i v_{i+1})\in F\},\] where $v_{\ell+1}=v_1$. Then \[|E_F(C)|\le \frac{\ell}{2}.\] 
Moreover, if the equality holds, then for every directed edge $v_i v_{i+1}$ of $C$ with $v_i v_{i+1}\notin E_F(C)$, one has $\rho(v_{i+1}v_i)\in F$. 
\end{lemma}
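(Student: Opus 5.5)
Since $F$ is a proper face, there is a linear functional $h$ with $h\le 1$ on $P_G^{\pm}$ and $F=\{x\in P_G^{\pm}: h(x)=1\}$. Because $P_G^{\pm}$ is centrally symmetric and contains the origin in its relative interior, every proper face has this form. More precisely, $P_G^{\pm}$ lies in the hyperplane $\sum x_i=0$, so we may take $h(x)=\langle a,x\rangle$ for some $a\in\RR^n$ with $\max_{x\in P_G^{\pm}} h(x)=1>0$. If the maximum were $\le 0$, symmetry would force $h\equiv 0$ on $P_G^{\pm}$ and the face would be the whole polytope. Thus for every directed edge $ij$ we have $a_i-a_j\le 1$, with equality exactly when $\rho(ij)\in F$. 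The plan is to sum $h$ around the cycle.

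Set $d_i=a_{v_i}-a_{v_{i+1}}$ for $i=1,\dots,\ell$, with indices mod $\ell$. The sum telescopes: $\sum_{i=1}^{\ell} d_i=0$. Each $d_i=h(\rho(v_iv_{i+1}))\in[-1,1]$, since both $\rho(v_iv_{i+1})$ and $\rho(v_{i+1}v_i)=-\rho(v_iv_{i+1})$ lie in $P_G^{\pm}$. Let $k=|E_F(C)|$, which is the number of indices with $d_i=1$. Then
\[
0=\sum_i d_i\ \ge\ k\cdot 1+(\ell-k)\cdot(-1)=2k-\ell,
\]
so $k\le \ell/2$.

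For the equality case, if $k=\ell/2$ then the inequality above is an equality. This forces $d_i=-1$ for every $i$ with $v_iv_{i+1}\notin E_F(C)$. Then $h(\rho(v_{i+1}v_i))=-d_i=1$, so $\rho(v_{i+1}v_i)\in F$.

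The only point needing care is the reduction to a supporting functional with value exactly $1$. One way is to note that $0$ is in the relative interior of $P_G^{\pm}$ by central symmetry. Alternatively, one can avoid normalization: take any supporting functional with maximum $m$, observe $m>0$ for a proper face, and run the same computation with $m$ in place of $1$, using $d_i\ge -m$ by symmetry. Everything else is the telescoping sum.
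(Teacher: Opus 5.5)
Your proof is correct and is essentially the paper's own argument. You normalize a supporting functional to take value $1$ on $F$, use central symmetry to get $|h_{v_i}-h_{v_{i+1}}|\le 1$ along edges, and telescope around the cycle; in the equality case every term outside $E_F(C)$ is forced to equal $-1$. Your symmetry argument that the maximum is positive for a proper face is just a more explicit version of the paper's remark that the origin lies in the relative interior of $P_G^{\pm}$.
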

\begin{proof}
Let $h\in (\RR^n)^*$ be a linear functional defining $F$ such that $h(x)=1$ for $x\in F$ and $h(x)<1$ for $x\in P_G^{\pm}\setminus F$. 
Note that we can find such $h$ since $P_G^\pm$ is a polytope containing the origin in its relative interior. 
Write $h_i=h(\eb_i)$ for $i=1,\ldots,n$. For any edge $\{u,v\}$ in $G$, the inequalities $h(\rho(uv))\le 1$ and $h(\rho(vu)) \le 1$ hold. 
Hence, we have $|h_u-h_v|\le 1$ for every $\{u,v\}\in E(G)$. 

Let $k=|E_F(C)|$. If $v_i v_{i+1}\in E_F(C)$, then $h_{v_i}-h_{v_{i+1}}=1$. 
For every directed edge $v_i v_{i+1}$ of $C$, we also have $h_{v_i}-h_{v_{i+1}}\ge -1$. By summing along the directed edges of $C$, we obtain that 
\[0=\sum_{i=1}^{\ell}(h_{v_i}-h_{v_{i+1}})\ge k-(\ell-k)=2k-\ell.\]
Thus, $k\le \ell/2$. If the equality holds, then all of the remaining terms must be $-1$. 
Hence, $h_{v_{i+1}}-h_{v_i}=1$ whenever $v_i v_{i+1}\notin E_F(C)$, which means $\rho(v_{i+1}v_i)\in F$. 
\end{proof}

We introduce three conditions for a set $T$ of three directed edges of $G$:
\begin{enumerate}[label=\textup{(A\arabic*)}]
\item $T$ contains no opposite pair $uv,vu$.
\item No two directed edges in $T$ are contained in a common directed cycle of length $3$ or $4$ in $G$.
\item The three directed edges in $T$ are not contained in a common directed cycle of length $5$ or $6$ in $G$. 
\end{enumerate}
For a set $T$ of three directed edges, put \[  \Delta_T=\conv\{\rho(e):e\in T\}.\]

\begin{proposition}\label{prop:triangles}
Let $T$ be a set of three directed edges of $G$.  Then $\Delta_T$ is a triangular face of $P_G^{\pm}$ 
if and only if $T$ satisfies \textup{(A1)}, \textup{(A2)}, and \textup{(A3)}. 
\end{proposition}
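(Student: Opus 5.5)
The proof splits into necessity and sufficiency, both phrased through a linear functional $h$ with values $h_i=h(\eb_i)$. A face of $P_G^{\pm}$ is cut out by such an $h$ with $|h_u-h_v|\le 1$ on edges, and the face consists of all $\rho(uv)$ with $h_u-h_v=1$.

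\emph{Necessity.} Suppose $\Delta_T$ is a triangular face $F$. First, (A1) holds: if $uv,vu\in T$, then $0$ would be the midpoint of $\rho(uv)$ and $\rho(vu)$, so $0\in F$, impossible since $0$ is interior. For (A2), suppose $e,e'\in T$ lie on a directed cycle $C$ of length $\ell\in\{3,4\}$. Then $|E_F(C)|\ge 2$, while Lemma~\ref{lem:cycle-face-obstruction} gives $|E_F(C)|\le \ell/2$.
\begin{itemize}
\item For $\ell=3$ this is already a contradiction.
\item For $\ell=4$ equality holds, and the lemma forces $\rho$ of the two reversed remaining edges into $F$. Then $F$ would contain at least $4$ vertices unless these coincide with elements of $T$. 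They cannot, since reversed edges of $C$ are not edges of $C$ traversed forward, and $T$ has only one further element.
\end{itemize}
For (A3), if all three edges lie on a directed cycle of length $5$ or $6$, the lemma gives $3\le 5/2$, a contradiction for length $5$. For length $6$ equality holds, and three further reversed vertices are forced into $F$, contradicting that $F$ is a triangle.

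One also needs the three points to be affinely independent. This follows from (A1) and the fact that they lie on a hyperplane $h=1$ not through $0$; the only possible degeneracy is collinearity, which for $\pm(\eb_i-\eb_j)$ vectors would force an opposite pair or a $3$-cycle relation.

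\emph{Sufficiency.} This is the main obstacle: given $T$ satisfying (A1)--(A3), we must construct $h$ with $h_u-h_v=1$ exactly on the three arcs of $T$ and $|h_a-h_b|<1$ on every other edge of $G$ (after adjusting strictness). My plan is to take $h$ integer-valued, in the spirit of the facet descriptions of \cite{HJM}, and then perturb.

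Let $T=\{u_1v_1,u_2v_2,u_3v_3\}$. Start with the function that is $1/2$ on each tail $u_i$, $-1/2$ on each head $v_i$, and $0$ elsewhere. This is well defined provided no vertex is both a head and a tail; the case where a vertex is both must be handled by shifting values. Edges among the $u$'s or among the $v$'s then get difference $0$, an edge $u_i v_j$ with $i\ne j$ gets difference $1$, and everything else gets $\le 1/2$.

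The bad cases are exactly:
\begin{itemize}
\item extra edges $u_iv_j$;
\item coincidences such as $v_i=u_j$ (a path).
\end{itemize}
Conditions (A2) and (A3) are used to resolve these. An extra edge $u_iv_j$ together with a $v_j u_j$ or $v_iu_j$ adjacency creates a short cycle through two arcs of $T$. More precisely, the directed cycle $u_i v_i \cdots u_j v_j \cdots$ closes with length $\le 4$ when two arcs are linked by edges, and with length $5$ or $6$ when all three arcs are chained. I would do a case analysis on the subgraph induced on $\{u_i,v_i\}$ (at most $6$ vertices), according to which vertices coincide and which cross edges exist, and in each case define $h$ on these vertices with values in $\{-1,-1/2,0,1/2,1\}$, then set $h$ to an intermediate value elsewhere.

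Finally, an alternative cleaner route would be LP duality/Farkas. If no such $h$ exists, there is a nonnegative combination of the constraints certifying this, which decomposes into a directed cycle through arcs of $T$ where the count violates the inequality in Lemma~\ref{lem:cycle-face-obstruction}. Such a cycle would contain two arcs of $T$ with length $\le 4$, or three with length $\le 6$, contradicting (A2) or (A3). I would attempt this duality argument first, as it mirrors the lemma directly. In that setting the expected difficulty is controlling the cycle length bound from the certificate.
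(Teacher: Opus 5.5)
Your necessity direction is correct, and for (A2) it is more self-contained than the paper's. The paper cites \cite[Lemma~3.2]{CRV}, whereas you use the equality case of Lemma~\ref{lem:cycle-face-obstruction} on a directed $4$-cycle. The gap is sufficiency, which the proposal does not prove. Your explicit construction cannot work as planned, because the constraints propagate through vertices outside $\{u_i,v_i\}$. For example, in $G=C_7$ the triple $T=\{12,23,34\}$ is valid, since the only directed cycles of length $\ge 3$ have length $7$. Yet any $h$ cutting out $\Delta_T$ has $h_1-h_4=3$, so $h$ cannot take values in $\{-1,-\tfrac{1}{2},0,\tfrac{1}{2},1\}$ on $\{1,2,3,4\}$. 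Moreover $h_5,h_6,h_7$ must climb from $h_4$ to $h_1$ in steps of size $<1$, so no common ``intermediate value'' exists for them. Your $\pm\tfrac{1}{2}$ start fails even with six distinct endpoints. An extra edge $\{u_i,v_j\}$ gets difference $1$, and (A2), (A3) do not forbid it: it lies on a directed $4$-cycle through $u_iv_i$ and $u_jv_j$ only if $\{v_i,u_j\}$ is also an edge. So such an edge must be handled by changing $h$, not by the conditions.

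Your second (duality) route is the right one and is essentially the paper's argument, but its decisive points are left open.
\begin{itemize}
\item \emph{Strictness.} Require $h_x-h_y=1$ for $xy\in T$ and $h_x-h_y\le 1-\varepsilon$ for every directed edge $xy\notin T$, with $0<\varepsilon<1/n$. This is a system of difference constraints, feasible iff its constraint digraph has no negative directed cycle. That criterion is the concrete form of your Farkas certificate, so no separate decomposition argument is needed.
\item \emph{Weight count.} Let $C$ be a directed cycle with $\ell$ edges, $k=|T\cap E(C)|$, and $s$ edges $e$ with $e,\overline{e}\notin T$. Its weight is $\ell-2k-\varepsilon s$, and $\varepsilon s<1$. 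So an obstruction requires $\ell\le 2k$: it attains \emph{or} violates the bound of Lemma~\ref{lem:cycle-face-obstruction}, not merely violates it.
\item \emph{Role of lengths $4$ and $6$.} The equality cases $(\ell,k)=(4,2)$ and $(6,3)$, where $s\ge 1$, are exactly what strictness turns into obstructions. This is why (A2) and (A3) must include lengths $4$ and $6$.
\item \emph{No cycle length to control.} The difficulty you anticipated does not arise; just split by $\ell$. For $\ell=2$, (A1) gives $k\le 1$, and if $k=1$ then $s=0$, so the weight is $0$. Use (A2) for $\ell\in\{3,4\}$, (A3) for $\ell\in\{5,6\}$, and $k\le |T|=3$ for $\ell\ge 7$. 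This gives $\ell-2k\ge 1$ whenever $\ell\ge 3$.
\item \emph{Affine independence.} Your remark belongs here, not in the necessity part, and needs no graph argument. The face cut out by $h$ has exactly the three vertices $\rho(e)$, $e\in T$, and three distinct vertices of a polytope are never collinear.
\end{itemize}
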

\begin{proof}
{\bf (Only if)}: Assume that $\Delta_T$ is a triangular face. 
Since $P_G^{\pm}$ is centrally symmetric, no proper face contains a pair of antipodal vertices. Hence, $T$ contains no opposite pair, i.e., satisfies (A1). 
Moreover, by the characterization of the edges of symmetric edge polytopes \cite[Lemma~3.2]{CRV}, 
two vertices $\rho(e)$ and $\rho(f)$ span an edge of $P_G^{\pm}$ if and only if the directed edges $e$ and $f$ are not contained in a common directed cycle of length $3$ or $4$. 
Since every pair of vertices of a triangular face spans an edge, (A2) follows. 
Finally, if the three directed edges in $T$ were contained in a common directed cycle $C$ of length $5$ or $6$, 
then Lemma~\ref{lem:cycle-face-obstruction} gives \[3\le |E_{\Delta_T}(C)|\le \frac{\ell(C)}{2}.\]
This is impossible for $\ell(C)=5$. If $\ell(C)=6$, the equality holds and Lemma~\ref{lem:cycle-face-obstruction} implies that 
$\Delta_T$ contains additional vertices, contradicting that it is a triangle. Thus, (A3) holds. 

\bigskip

\noindent
{\bf (If)}: Assume that $T$ satisfies (A1), (A2), and (A3). Our goal is to construct a linear functional whose maximizers on $P_G^{\pm}$ are precisely $\{\rho(e):e\in T\}$. 
We use the standard feasibility criterion for systems of difference constraints: a system $z_j-z_i\le c_{ij}$ has a solution if and only if 
the associated weighted directed graph has no directed cycle of negative total weight; see, e.g., \cite[Theorem~24.9]{CLRS}. 

Choose $0<\varepsilon<1/n$ and consider the following system of constraints: 
\[h_x-h_y=1 \quad \text{for }xy\in T, \;\;\text{ and }\;\; h_x-h_y\le 1-\varepsilon \quad \text{for }\{x,y\} \in E(G) \text{ with }xy \not\in T.\]
Equivalently, for each directed edge $e=uv$, we impose $h_v-h_u \le c(e)$ with 
\[c(e)=
\begin{cases}
-1, & e \in T,\\ 
1, & \overline e\in T,\\
1-\varepsilon, & e\notin T \text{ and } \overline e\notin T,
\end{cases}
\]
where $\overline e=vu$ is the opposite directed edge. 
It remains to show that there is no negative directed cycle by \cite[Theorem~24.9]{CLRS}. 
Let $C$ be a directed cycle in $G$, let $\ell=\ell(C)$, let $k=|T\cap E(C)|$, and 
let $s$ be the number of edges $e\in E(C)$ such that neither $e$ nor $\overline e$ belongs to $T$. 
The total weight of $C$ is \[-k+(\ell-k-s)+(1-\varepsilon)s=\ell-2k-\varepsilon s.\]

\begin{itemize}
\item If $\ell=2$, then $C$ consists of opposite directed edges.
By (A1), $k\le1$. If $k=0$, then the total weight is $2-2\varepsilon>0$, while if $k=1$, then $s=0$, so the total weight is $0$.
\item If $\ell\ge3$, then
\[
k\le
\begin{cases}
1,&\ell=3 \text{ or } 4,\\
2,&\ell=5 \text{ or } 6,\\
3,&\ell\ge7,
\end{cases}
\]
by (A2), (A3), and $k\le|T|$, respectively.
Hence $\ell-2k\ge1$.
Since $s\le\ell\le n$ and $\varepsilon<1/n$, we obtain
\[
\ell-2k-\varepsilon s>0.
\]
\end{itemize}
These imply that there is no negative directed cycle in $G$. 

Thus, the system has a solution $h$. For this solution, $h(\rho(e))=1$ for $e\in T$ and $h(\rho(f))<1$ for every directed edge $f\notin T$. 
Therefore, the face defined by $h$ is $\Delta_T$, which has precisely the three vertices $\rho(e)$, $e\in T$, as desired.
\end{proof}

\bigskip
\section{McMullen's criterion}\label{sec:criterion}

In this section, we recall McMullen's criterion for Minkowski indecomposability (Theorem~\ref{thm:mcmullen}) and give its corollaries. 

\begin{definition}\label{def:Mc}
Let $P$ be a convex polytope. 
\begin{itemize}
\item A sequence of faces $F_0,F_1,\ldots,F_k$ of $P$ is a \emph{strong chain} if $\dim(F_{j-1}\cap F_j)\ge 1$ for $j=1,\ldots,k$. 
\item A family $\Fc$ of faces of $P$ is \emph{strongly connected} if, for any $F,G\in\Fc$, there is a strong chain $F=F_0,F_1,\ldots,F_k=G$ with $F_j\in\Fc$ for all $j$. 
\item A family $\Fc$ of faces of $P$ \emph{touches} a face $H$ of $P$ if $\bigl(\bigcup_{F\in\Fc}F\bigr)\cap H\ne\emptyset$. 
\end{itemize}
\end{definition}

\begin{theorem}[{\cite[Theorem~2]{McM}}]\label{thm:mcmullen}
If a convex polytope $P$ has a strongly connected family of indecomposable faces which touches every facet of $P$, then $P$ is Minkowski indecomposable. 
\end{theorem}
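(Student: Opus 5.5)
The plan is to start from an arbitrary decomposition $P=Q+R$ and show that $Q=\lambda P+t$ for some $\lambda\in[0,1]$ and $t$. Then $R=(1-\lambda)P-t$ as well, so every summand is a point or homothetic to $P$, which is what indecomposability means. The basic tool is that Minkowski sums respect faces. For every direction $u$, the face of $P$ maximizing $u$ satisfies $P^u=Q^u+R^u$. In particular, the normal fan of $P$ refines that of $Q$.

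\emph{Step 1: each face in the family scales uniformly.} Let $\Fc$ be the given strongly connected family of indecomposable faces. For $F\in\Fc$, choose $u$ in the relative interior of its normal cone, so that $F=Q^u+R^u$. Indecomposability of $F$ gives $Q^u=\lambda_F F+t_F$ with $\lambda_F\ge 0$; a point corresponds to $\lambda_F=0$. For every face $F'\subseteq F$, taking faces in the corresponding direction gives $Q^{u'}=\lambda_F F'+t_F$.

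\emph{Step 2: the scalings agree across $\Fc$.} Let $F,G\in\Fc$ be consecutive in a strong chain, and let $E=F\cap G$, a face of dimension at least $1$. By Step 1, the summand $Q_E$ of $Q$ corresponding to $E$ equals both $\lambda_F E+t_F$ and $\lambda_G E+t_G$. Since $E$ is not a point, comparing these two sets forces $\lambda_F=\lambda_G$ and $t_F=t_G$. Concretely, compare the lengths of a common edge, and then compare a single vertex. By strong connectivity, a single pair $(\lambda,t)$ works for all of $\Fc$. Consequently, for every vertex $v$ lying in $\bigcup_{F\in\Fc}F$, we get $Q_v=\{\lambda v+t\}$.

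\emph{Step 3: the facets pin down $Q$.} Let $H$ be a facet of $P$ with outer normal $u_H$. Since $\Fc$ touches $H$, some vertex $v\in H$ lies in the union. Then
\[
h_Q(u_H)=\langle Q_v,u_H\rangle=\lambda\langle v,u_H\rangle+\langle t,u_H\rangle=\lambda h_P(u_H)+\langle t,u_H\rangle,
\]
where $h_Q$ and $h_P$ denote support functions. Because the normal fan of $P$ refines that of $Q$, every facet normal of $Q$ is a facet normal of $P$. Hence
\[
Q=\{x:\langle x,u_H\rangle\le h_Q(u_H)\ \text{for all facets } H \text{ of } P\}=\lambda P+t.
\]
Here, if $Q$ is lower-dimensional, we work within the affine hull; equivalently we use that $h_Q$ is linear on each maximal cone of the fan of $P$. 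It follows that $R$ has support function $(1-\lambda)h_P-\langle t,\cdot\rangle$, which forces $\lambda\le 1$ and $R=(1-\lambda)P-t$.

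I expect the main obstacle to be the last step. One must justify carefully that the values of $h_Q$ on the facet normals of $P$ determine $Q$. The cleanest route is the fact that $h_Q$ is linear on each normal cone of a vertex of $P$. The restriction to facet normals is then handled by writing $Q$ as an intersection of the halfspaces given by those normals, which is valid exactly because $Q$ is a summand of $P$.
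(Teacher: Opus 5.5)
The paper gives no proof of this statement and only cites McMullen. Your argument is correct and is essentially McMullen's: you propagate a single homothety $(\lambda,t)$ along strong chains of faces, then recover $Q$ from its support numbers $h_Q(u_H)$ on the facet normals of $P$. The last step also covers lower-dimensional $Q$, because $h_Q$ is linear on each vertex cone of the normal fan of $P$ and each such cone is spanned by facet normals. The one ingredient you use tacitly is that the face $Q_E$ attached to a face $E$ of $P$ does not depend on the chosen normal. This follows from the uniqueness of the decomposition of a face of $Q+R$ as a sum of a face of $Q$ and a face of $R$.
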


Note that triangles are Minkowski indecomposable, so a strongly connected family of triangular faces touching every facet is enough to prove Minkowski indecomposability. 
\begin{corollary}\label{cor:2-conn}
Let $G$ be a connected graph with $n\ge 4$ vertices. If $G$ is not $2$-connected, then $P_G^\pm$ is Minkowski indecomposable. 
\end{corollary}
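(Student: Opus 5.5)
The plan is to exploit the cut vertex. Since $G$ is connected but not $2$-connected and $n\ge 4$, there is a cut vertex $w$. We can then write $G=G_1\cup G_2$, where $G_1,G_2$ are connected subgraphs with $V(G_1)\cap V(G_2)=\{w\}$, each with at least one edge. Every cycle of $G$ lies in a single block, so no directed cycle of $G$ uses edges of both $G_1$ and $G_2$. Since $n\ge 4$, $G$ has at least $3$ edges, so after relabeling $G_1$ has at least $2$ edges. In particular $P_{G_1}^{\pm}$ has dimension $|V(G_1)|-1\ge 2$, so its $1$-skeleton is a connected graph with at least one edge. (The hypothesis $n\ge4$ is genuinely needed: for the path on $3$ vertices $P_G^\pm$ is a square, which is decomposable.)

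\textbf{The family of triangles.} Take triangles $\Delta_T$ with $T=\{e,f,g\}$, where:
\begin{itemize}
\item $e,f$ are directed edges of $G_1$ such that $\rho(e),\rho(f)$ span an edge of $P_{G_1}^{\pm}$;
\item $g$ is any directed edge of $G_2$.
\end{itemize}
By \cite[Lemma~3.2]{CRV}, the first condition means $e,f$ are not opposite and do not lie on a common directed $3$- or $4$-cycle of $G_1$. These cycles are the same as those of $G$ through $e$ and $f$, since every cycle lies in one block. The edge $g$ shares no cycle with $e$ or $f$, so (A1), (A2), (A3) hold. Proposition~\ref{prop:triangles} then shows that each $\Delta_T$ is a triangular face of $P_G^{\pm}$.

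\textbf{Strong connectivity.} The triangles for $\{e,f,g\}$ and $\{e,f,g'\}$ share the segment $[\rho(e),\rho(f)]$, so $g$ can be changed freely. The triangles for $\{e,f,g\}$ and $\{e,f',g\}$ share the segment $[\rho(e),\rho(g)]$. Hence the pair $\{e,f\}$ can move to any adjacent edge of the $1$-skeleton of $P_{G_1}^{\pm}$, that is, one sharing a vertex with it. Connectivity of the whole family therefore reduces to connectivity of the line graph of the $1$-skeleton of $P_{G_1}^{\pm}$. This holds because the $1$-skeleton of a polytope is connected and here has edges.

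\textbf{Touching every facet.} Let $F$ be a facet of $P_G^{\pm}$; it has dimension $n-2$ and does not contain the origin. Suppose all vertices of $F$ were of the form $\rho(e)$ with $e\in G_1$. Then $F$ would lie in the linear span of $P_{G_1}^{\pm}$, which has dimension $|V(G_1)|-1\le n-2$. An affine subspace not through $0$ inside a linear space of dimension at most $n-2$ has dimension at most $n-3$, a contradiction. So $F$ contains some $\rho(g)$ with $g$ a directed edge of $G_2$. Since $g$ is arbitrary in our family, some triangle of the family contains $\rho(g)$, and so the family touches $F$.

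\textbf{Conclusion and main obstacle.} Triangles are indecomposable, so Theorem~\ref{thm:mcmullen} gives that $P_G^{\pm}$ is Minkowski indecomposable. The only delicate point is the facet-touching step. I would handle it with the dimension count above rather than invoking a free-sum description of the facets.
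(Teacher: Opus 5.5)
Your proof is correct, but it takes a different route from the paper.

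\textbf{The paper's route.} The paper does not build triangles for this corollary. It uses three facts:
\begin{enumerate}[label=(\roman*)]
\item $P_G^\pm$ is the free sum of the symmetric edge polytopes of the blocks of $G$ \cite{MHNOH};
\item every facet of a free sum is affinely a join of facets of the summands, hence indecomposable by \cite[Theorem~3]{McM};
\item the family of \emph{all} facets is strongly connected because $\dim P_G^\pm=n-1\ge 3$, and it trivially touches every facet.
\end{enumerate}
Theorem~\ref{thm:mcmullen} then finishes.

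\textbf{Your route.} You split $G$ at a single cut vertex and use the fact that no cycle meets both sides. With that, Proposition~\ref{prop:triangles} gives an explicit family of triangular faces $\{e,f,g\}$. This family is strongly connected because it is indexed by the line graph of the $1$-skeleton of $P_{G_1}^\pm$ times the directed edges of $G_2$. You replace the facet structure of free sums by an elementary dimension count showing that every facet contains some $\rho(g)$ with $g$ a directed edge of $G_2$.

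\textbf{Comparison.} The paper's argument is shorter and yields the extra fact that every facet of $P_G^\pm$ is indecomposable. However, it leans on two external inputs: the free-sum description and McMullen's join theorem. Yours stays entirely within the triangle-based toolkit the paper uses for the $2$-connected case. It also makes transparent where $n\ge 4$ enters. You need a side with at least two edges, so that $P_{G_1}^{\pm}$ has an edge not joining antipodal vertices. In the paper, $n\ge4$ enters instead through $\dim\ge 3$.

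\textbf{One small precision.} In the touching step, what you actually need is that the affine hull of $F$ misses the origin, not just $F$ itself. This holds because $F$ lies in a supporting hyperplane $\{h=1\}$. With that, your dimension count goes through.
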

\begin{proof}
Let $B_1,\ldots,B_s$ be the $2$-connected components of $G$. Since $G$ is connected but not $2$-connected, we have $s\ge 2$. 
We regard each $P_{B_i}^{\pm}$ as a polytope in $\RR^n$. 

Here, we recall that for two polytopes $P_1 \subset \RR^{n_1}$ and $P_2 \subset \RR^{n_2}$ containing the origin in 
their interiors, the \emph{free sum} and the \emph{join} of $P_1$ and $P_2$ are defined, respectively, by
\[P_1 \oplus P_2 = \conv(\{(\alpha_1,{\bf 0}_2) : \alpha_1 \in P_1\} \cup \{({\bf 0}_1, \alpha_2) : \alpha_2 \in P_2\}) \subset \RR^{n_1+n_2}\] 
and 
\[P_1 * P_2 = \conv(\{(\alpha_1,{\bf 0}_2,0) : \alpha_1 \in P_1\} \cup \{({\bf 0}_1, \alpha_2,1) : \alpha_2 \in P_2\}) \subset \RR^{n_1+n_2+1}.\] 
Note that $2$-connected components of $G$ correspond to free summands of $P_G^\pm$, see \cite{MHNOH}.

We emphasize the distinction between a free sum and a join. 
Although $P_G^\pm$ itself is the free sum of symmetric edge polytopes of the $2$-connected components, 
we see that the facets of a free sum are affinely isomorphic to the joins of facets of the summands. 
Hence, by \cite[Theorem~3]{McM}, it follows that every facet of $P_G^\pm$ is Minkowski indecomposable. 

Finally, in general, the family of all facets of a polytope of dimension at least $3$ is strongly connected and trivially touches every facet. 
Therefore, we obtain Minkowski indecomposability by Theorem~\ref{thm:mcmullen}, as required.  
\end{proof}

\medskip

Let $\Tc(G)$ be the set of all three-element sets of directed edges satisfying (A1), (A2), and (A3). 
For $\Fc\subset \Tc(G)$, let \[\rho(\Fc)=\{\Delta_T:T\in\Fc\}.\]
Namely, $\rho(\Fc)$ corresponds to a collection of triangular faces of $P_G^\pm$. 
We say that two members of $\Tc(G)$ are \emph{adjacent} if they share exactly two directed edges. 
A subfamily $\Fc\subset \Tc(G)$ is \emph{connected} if its adjacency graph is connected. 
By Proposition~\ref{prop:triangles}, the connectedness of $\Fc$ implies the strong connectedness of $\rho(\Fc)$ in the sense of Definition~\ref{def:Mc}. 
\begin{proposition}\label{prop:cycle}
Let $G$ be a cycle of length at least $5$. Then $\rho(\Tc(G))$ is a strongly connected family of triangular faces which touches every facet of $P_G^{\pm}$. 
In particular, $P_G^{\pm}$ is Minkowski indecomposable. 
\end{proposition}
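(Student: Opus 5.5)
Let $G=C_n$ with $n\ge 5$ and vertices $1,2,\ldots,n$ in cyclic order. The only directed cycles of length at least $3$ are the two orientations of the whole cycle, $C^+=(1,2,\ldots,n,1)$ and $C^-$ its reverse. Let $e_i=i(i+1)$ (indices mod $n$) be the edges of $C^+$ and $\overline{e_i}$ their reverses.

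The first step is to describe $\Tc(G)$ explicitly. Condition (A2) only matters when $n=3,4$, and (A3) only when $n=5,6$; otherwise (A1) is the only constraint. So, for $n\ge 7$, $\Tc(G)$ consists of all triples containing at most one of $e_i,\overline{e_i}$ for each $i$. For $n=5,6$ we must additionally exclude triples lying entirely in $C^+$ or entirely in $C^-$. In every case a triple is determined by a $3$-subset $\{i,j,k\}\subset[n]$ of edge indices together with a sign for each index, and for $n=5,6$ the sign patterns $+++$ and $---$ are forbidden.

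The second step is connectedness of the adjacency graph. Adjacency allows us to replace one directed edge by another. We can flip the sign of a single index whenever the result is not monochromatic, and we can change one index while keeping its sign whenever the new index is unused. For $n\ge 7$ connectivity is then immediate: flip signs to reach an all-$+$ triple, then move indices one at a time to reach any target index set, then flip signs to reach the target pattern.

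For $n=5,6$ we stay among mixed sign patterns throughout. Any mixed triple can be moved to the pattern $(+,+,-)$ by flips that keep it mixed. Index changes preserve the number of $+$ signs. Using index moves we can permute which positions carry which sign and reach any index set; the check here is that the graph of $3$-subsets of an $n$-set with labels $(+,+,-)$, under single-element replacement, is connected for $n\ge5$, which holds because free indices exist since $n-3\ge2$. This small case check for $n=5$ is where I expect the main care is needed, but it is routine.

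The third step is that $\rho(\Tc(G))$ touches every facet. Every vertex $\rho(e)$ of $P_G^\pm$ lies in some triangle: given $e$, pick two further edges of the cycle with indices different from $e$'s, oriented oppositely to $e$ so that the triple is mixed, which (A3) requires when $n=5,6$. This is possible since $n\ge5$. Every facet contains a vertex, so the family touches every facet. Triangles are indecomposable, and adjacency gives strong chains, since two adjacent triangles share an edge of $P_G^\pm$ by Proposition~\ref{prop:triangles}. Theorem~\ref{thm:mcmullen} then yields indecomposability.
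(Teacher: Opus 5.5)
Your proof is correct and follows the same route as the paper: you describe $\Tc(G)$ explicitly, prove connectivity of the adjacency graph using index replacements and sign flips (the paper phrases this as connectivity of the Johnson graph $J(n,3)$ together with a $3$-cube or $6$-cycle of allowed sign patterns), show touching by extending any vertex to a valid triple, and then apply McMullen's criterion. Your touching step is slightly more careful than the paper's: for $n=5,6$ you explicitly make the extended triple mixed, which (A3) requires and which the paper's condition ``no opposite pair'' alone does not guarantee.
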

\begin{proof}
Let $G$ be a cycle of length $n \ge 5$ and write its edges as $e_i=\{i,i+1\}$ for $i=1,\ldots,n$, where $e_n=\{1,n\}$. 
Let $e_i^+$ and $e_i^-$ be the two directed edges corresponding to the two cyclic orientations of $G$. 
Since $G$ has no cycles of length $3$ or $4$, the condition (A2) imposes no restriction. 
The only directed cycles of length at least $5$ are the two cyclic orientations of the whole cycle. 
Hence, if $n \ge 7$, then a set of three directed edges belongs to $\Tc(G)$ if and only if it contains no opposite pair. 
If $n=5$ or $6$, then it belongs to $\Tc(G)$ if and only if it contains no opposite pair and is not contained in one of the two cyclic orientations of $G$. 

We show that $\Tc(G)$ is connected. 
Each $T\in \Tc(G)$ can be written as $T=\{e_i^{\sigma_i}:i\in I\}$, where $I\subset [n]$, $|I|=3$, and $\sigma_i\in\{+,-\}$. 
It is well known that the graph of $3$-element subsets of $[n]$, where two subsets are adjacent if they differ in one element, is connected
(since this is nothing but a Johnson graph $J(n,3)$, which is known to be connected).  
Hence, we see the connectedness of the members of $\Tc(G)$ by ignoring the signs $\sigma_i$. 
Hence, it remains to consider the possible sign patterns for a fixed $I$.
If $n \ge 7$, all eight sign patterns are allowed, and the corresponding adjacency graph is the $3$-cube. If $n=5$ or $6$, the number of the allowed patterns is six, namely, all except the two patterns in which all signs are equal, and the corresponding adjacency graph is a $6$-cycle. In either case, the adjacency graph is connected.
Therefore, $\Tc(G)$ is connected. 

It remains to show that $\rho(\Tc(G))$ touches every facet. Let $F$ be a facet of $P_G^{\pm}$, and choose a vertex $\rho(e)$ of $F$. 
Then we can extend the directed edge $e$ to some $T\in\Tc(G)$ by choosing two further edges of the cycle together with suitable orientations so that no opposite pair occurs. 
This implies that $F \cap \Delta_T\ne\emptyset$, i.e., $\rho(\Tc(G))$ touches every facet of $P_G^\pm$. 

Therefore, the conclusion follows from Theorem~\ref{thm:mcmullen}, as desired. 
\end{proof}
\begin{remark}
Note that we know the Minkowski indecomposability of symmetric edge polytopes of odd cycles of length at least $5$ more directly. 
In fact, by \cite[Corollary 2.3]{H}, $P_G^\pm$ is simplicial if and only if $G$ contains no even cycle. 
In general every simplicial polytope of dimension at least $3$ is Minkowski indecomposable. 
(This follows by considering all $2$-dimensional faces and applying Theorem~\ref{thm:mcmullen}.) 
\end{remark}

\bigskip
\section{Transition of triples of directed edges}\label{sec:transition}

For $u\in V(G)$, let $N_G(u)$ be the set of neighbours of $u$ and
$N_G[u]=N_G(u)\cup\{u\}$.  A vertex $u$ of degree at least $3$ is called
\emph{good} if there exist distinct $a,b,c\in N_G(u)$ such that
\[\{au,bu,cu\}\leadsto \{ua,ub,uc\},\]
where $\leadsto$ denotes the existence of a sequence of adjacent triples in $\Tc(G)$. 
A triple of directed edges will be called \emph{valid} if it belongs to $\Tc(G)$. 

\begin{proposition}\label{prop:transition-classification}
Let $G$ be a 2-connected graph with $n\ge 5$ vertices.  Suppose that $G$ has
at least two vertices of degree at least $3$.  If $G$ has no good vertex, then
\[G\cong K_n,\qquad G\cong K_{2,n-2},\qquad\text{or}\qquad G\cong K_{1,1,n-2}.\]
\end{proposition}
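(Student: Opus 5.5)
The proof will be by contraposition. Assuming $G$ is $2$-connected with $n\ge 5$, has at least two vertices of degree $\ge 3$, and is none of $K_n$, $K_{2,n-2}$, $K_{1,1,n-2}$, I will exhibit a good vertex. The basic tool is the \emph{single-edge flip}. For a vertex $u$ with distinct neighbours $a,b,c$, I want to pass from $\{au,bu,cu\}$ to $\{ua,ub,uc\}$. The natural route reverses one edge at a time, but a triple such as $\{ua,bu,cu\}$ is invalid whenever $b$ or $c$ is adjacent to $a$ (it creates the directed triangle $u\to a\to b\to u$), and similarly for $4$-cycles through common neighbours of $a,b$ other than $u$.

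So I will route through auxiliary triples $\{au,bu,xy\}$. Here $xy$ is a directed edge far from $u$, chosen so that (A2) and (A3) hold; for instance, $xy$ should lie on no short cycle with $au$ or $bu$. In this way one edge is swapped out, the others are reversed, and then the original edge is swapped back. Concretely, I will first record a lemma listing when the following hold:
\[
\{au,bu,cu\}\in\Tc(G), \qquad \{au,bu,cu\}\sim\{au,bu,xy\},
\]
and when reversing $au$ inside $\{au,bu,xy\}$ is allowed. Each condition should translate into the absence of specific short paths between $a,b,c,x,y$ avoiding $u$.

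Step 1 handles the local structure. Note that $\{au,bu,cu\}$ is automatically valid, since no directed cycle passes through $u$ twice in the same direction. The first sub-case is a vertex $u$ of degree $\ge 3$ having three neighbours $a,b,c$ that are pairwise nonadjacent and pairwise have no common neighbour besides $u$. Then the direct one-at-a-time flip works, apart from checking $5$- and $6$-cycles via (A3), so $u$ is good. Hence, if no vertex is good, every vertex of degree $\ge 3$ has its neighbourhood densely linked by triangles or $4$-cycles through $u$.

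Step 2 uses the second vertex $w$ of degree $\ge 3$ together with $2$-connectivity. Take two internally disjoint paths, and use an edge $xy$ on a long cycle through $u$ and $w$ as the auxiliary edge. If some edge of $G$ is at distance $\ge 2$ from $N_G[u]$ in a suitable sense (no $3$-, $4$-, $5$- or $6$-cycle combining it with two edges at $u$), the swap route succeeds and $u$ is good. So the absence of good vertices forces every edge to be ``close'' to every degree-$\ge 3$ vertex. This drastically bounds the diameter and forces every edge to lie on a $3$- or $4$-cycle with the edges at $u$.

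Step 3 is a structural case analysis showing that this closeness forces a complete multipartite structure. I will show that nonadjacency is transitive: if $x\not\sim y$ and $y\not\sim z$ but $x\sim z$, I will try to build an explicit flip sequence at a suitable vertex, using the induced path $x,z$ and the vertex $y$ to supply a safe auxiliary edge. This makes $G$ complete multipartite, $K_{a_1,\ldots,a_r}$. I then rule out the remaining shapes by constructing flips for each. With $r\ge 2$ and some part of size $\ge 3$ alongside another part of size $\ge 2$, as in $K_{3,3}$, I expect that a vertex in the small part is good, using $6$-cycles carefully. For $r\ge 3$ with two parts of size $\ge 2$, or three parts with some extra, I similarly find a good vertex. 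The survivors should be exactly all parts of size $1$ ($K_n$), $K_{2,n-2}$, and $K_{1,1,n-2}$. $K_{1,n-1}$ is excluded by $2$-connectivity.

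The main obstacle is Step 3, together with the (A3) checks throughout. The $5$- and $6$-cycle conditions are nonlocal, so each proposed flip sequence in a complete multipartite graph such as $K_{3,3}$ or $K_{2,2,1}$ must be verified against all long cycles. I would first settle small base cases ($K_{3,3}$, $K_{2,3}$ versus $K_{2,2,1}$, $K_{1,2,2}$) by explicit sequences, and then argue that larger graphs contain these configurations in a way that preserves validity.
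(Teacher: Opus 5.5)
Your proposal has a genuine gap: beyond Step 1 nothing is proved, and the one concrete mechanism you offer cannot work.

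\textbf{The swap mechanism fails.} Take any transition from $\{au,bu,cu\}$ to $\{ua,ub,uc\}$ in which every triple keeps at least two edges at $u$, as in your route through $\{au,bu,xy\}$. The first triple containing an out-edge $uy$ still contains an in-edge $xu$ with $x\ne y$. Condition (A2) then demands $xy\notin E(G)$ and $N_G(x)\cap N_G(y)=\{u\}$.

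So a far auxiliary edge $xy$ buys nothing beyond a pairwise version of Step 1. The route fails precisely in the dense cases that matter. For example, for a rim vertex $u$ of the wheel with five rim vertices, any two neighbours of $u$ are adjacent or share the hub, yet $u$ is good.

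\textbf{The missing idea.} Pass through triples containing \emph{exactly one} edge at $u$, built from auxiliary edges attached to $N_G(u)$. For instance, when $q,r\notin N_G[u]$ and $qr,qc\in E(G)$, use $\{au,bu,qr\}\to\{au,qc,qr\}\to\{uc,qc,qr\}\to\{ua,uc,qr\}$. Keep the edges at $u$ co-oriented at every stage, and make two edges of every triple share a head or a tail.

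The last requirement makes (A3) vacuous, since a simple directed cycle cannot contain two edges with a common head or tail. So the nonlocal $5$- and $6$-cycle checks you single out as the main obstacle never arise. Only triangles and $4$-cycles must be excluded, which is a purely local check.

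\textbf{Steps 2 and 3 are intentions, not arguments.} You assert three things with ``I will try'' and ``I expect'' but exhibit no transition for any of them:
\begin{enumerate}[label=(\roman*)]
\item that every edge lies on a short cycle with edges at $u$;
\item that non-adjacency is transitive;
\item that the non-exceptional complete multipartite graphs contain good vertices.
\end{enumerate}
Also, $2$-connectivity is never used in a precise way. Note too that $K_{2,3}=K_{2,n-2}$ is exceptional, and that $K_{2,2,1}$ and $K_{1,2,2}$ are the same graph.

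\textbf{The detour through complete multipartite graphs is unnecessary.} Fix one non-good $u$ with $\deg u\ge 3$.

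If $V(G)\ne N_G[u]$:
\begin{enumerate}[label=(\roman*)]
\item The switch above makes $V(G)\setminus N_G[u]$ independent. By $2$-connectivity, each outside vertex then has at least two neighbours in $N_G(u)$.
\item An analogous switch for non-adjacent outside vertices $q,r$ with $qa,rc\in E(G)$ and $a\ne c$ leaves a single outside vertex $w$.
\item A third switch shows that no vertex of $N_G(w)\cap N_G(u)$ has a neighbour in $N_G(u)$.
\item Since $u$ is not a cut vertex, $N_G(u)\subseteq N_G(w)$, and so $G\cong K_{2,n-2}$.
\end{enumerate}

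If $V(G)=N_G[u]$, use a switch for a non-edge $pq$ in $N_G(u)$ with $pa,qb\in E(G)$ and $a\ne b$. Together with connectivity of $G-u$, this gives $K_n$ or $K_{1,1,n-2}$.
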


We first record the local switches used in the proof of Proposition~\ref{prop:transition-classification}. 
In each case, the displayed sequence consists of members of $\Tc(G)$. We may verify (A1), (A2) and (A3) in a direct way. 
\begin{lemma}[Local switches]\label{lem:local-switches}
Let $u$ be a vertex of degree at least $3$. Then $u$ is good in each of the following four cases. See Figure~\ref{fig:local}. 
\begin{enumerate}[label=\textup{(\alph*)}]
\item There exist $q,r\in V(G)\setminus N_G[u]$ and $c\in N_G(u)$ such that $qr\in E(G)$ and $qc\in E(G)$.
\item There exist distinct non-adjacent vertices $q,r\in V(G)\setminus N_G[u]$ and distinct vertices $a,c\in N_G(u)$ such that $qa\in E(G)$ and $rc\in E(G)$.
\item There exists $w\notin N_G[u]$ such that $N_G(w)\cap N_G(u)$ has at least two vertices, and some vertex of $N_G(w)\cap N_G(u)$ has a neighbour in $N_G(u)$.
\item There exist distinct vertices $p,q,a,b\in N_G(u)$ such that $pq\notin E(G)$, $pa\in E(G)$, and $qb\in E(G)$. 
\end{enumerate}
\end{lemma}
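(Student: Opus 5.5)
In each of the four cases (a)--(d) I will write down an explicit sequence of triples
\[
T_0=\{au,bu,cu\},\ T_1,\ \ldots,\ T_m=\{ua,ub,uc\},
\]
with consecutive triples sharing exactly two directed edges. The strategy is to replace the three inward edges at $u$ by outward ones one at a time. Flipping $cu$ directly to $uc$ is impossible, because $\{au,bu,uc\}$ is already valid but the intermediate step must stay valid. The useful observation is that a triple containing both an inward edge $xu$ and an outward edge $uy$ at $u$ violates (A2) whenever $x,y$ have a common neighbour other than $u$, or are adjacent (cycle $u\to y\to x\to u$). So the naive path fails exactly when $N_G(u)$ is dense. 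The auxiliary structures in (a)--(d) provide ``parking'' edges far from $u$. To move from $cu$ to $uc$, I pass through an edge such as $qr$ or $rq$ lying outside $N_G[u]$. That edge can coexist with both the in-edges and the out-edges because any short cycle through it and through $u$ would need length at least $5$.

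Concretely, for (a) I would try the sequence
\[
\{au,bu,cu\}\to\{au,bu,qr\}\to\{au,bu,\ldots\}\to\cdots\to\{ua,ub,uc\}.
\]
I use the parking edge $qr$ (or $rq$, chosen by orientation) to hold one slot while the other two slots are flipped one at a time. Each flip is routed through $qr$ or through $qc$/$cq$, and the orientations are chosen so that every pair is separated by distance: a directed cycle of length $3$ or $4$ through $xu$ and through an edge at $q,r$ would force $q$ or $r$ to lie in $N_G[u]$ or force an extra adjacency. For (b) the two non-adjacent far vertices $q,r$, attached to $a,c$ respectively, give two independent parking edges $qa$/$aq$ and $rc$/$cr$. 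For (c) the vertex $w$ and its two common neighbours with $u$ play the same role, with the extra neighbour inside $N_G(u)$ used to break the symmetry. For (d) the parking edges are $pa$ and $qb$ inside the neighbourhood. Here the non-adjacency $pq\notin E(G)$ is what prevents a $4$-cycle $u\,p\,?\,q$ from linking the relevant pairs. In each case the triples $a,b,c$ of the definition of good are taken to be the named neighbours, completed by an arbitrary third neighbour when only two are named, using $\deg u\ge 3$.

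For each displayed triple I check (A1) trivially. For (A2) I list, for each of the three pairs, the possible directed $3$- and $4$-cycles containing both edges. For a pair $e=xy$, $f=zw$, such a cycle exists iff $y=z$ and $w\to x$ closes with one more edge, or there is a path $y\to z$ and $w\to x$ of total length $\le 2$. Each of these requires an adjacency or coincidence that the hypotheses exclude: $q,r\notin N_G[u]$, non-adjacency of $q,r$ in (b), $pq\notin E(G)$ in (d), and distinctness. For (A3) I check that the three directed edges cannot lie on a common directed cycle of length $5$ or $6$. Often this holds because the triple already contains both an in-edge and an out-edge at $u$ together with a third edge, and a cycle through $u$ uses exactly one in- and one out-edge at $u$. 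When that argument is not available, a length count does the job: the three edges are pairwise far apart, so any cycle containing them has length at least $7$.

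I expect the main obstacle to be case (c), and to a lesser extent (d). There the parking edges are adjacent to $N_G(u)$, so (A2) genuinely constrains the orientations, and (A3) must be checked for $5$- and $6$-cycles of the form $u\to y\to w\to x\to u$ extended by one or two edges. My approach is to first fix the orientation of the parking edge so that it points ``away from'' the in-edges it coexists with, for example using $wx$ while $xu$ is present and switching to $xw$ only after $xu$ has become $ux$. I then verify by a short enumeration of cycles through $u$ of length at most $6$. If a direct sequence fails, I would add one more intermediate step using a second parking edge at the other common neighbour of $w$ and $u$.
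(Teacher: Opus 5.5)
\textbf{Verdict: there is a genuine gap.} The lemma consists of four explicit transitions, and you never write one down. Case (a) stops at $\{au,bu,cu\}\to\{au,bu,qr\}\to\{au,bu,\ldots\}\to\cdots$. Cases (b)--(d) are described only in words (``parking edges'', ``break the symmetry'', ``if a direct sequence fails, add one more step''). So none of the four cases is actually proved.

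Worse, the mechanism you describe fails if taken literally. Holding $qr$ fixed while the other two slots are flipped one at a time forces a triple like $\{ua,bu,qr\}$, with an in-edge and an out-edge at $u$. By your own observation this violates (A2) as soon as $ab\in E(G)$ (the $3$-cycle $u\to a\to b\to u$) or $a,b$ have a common neighbour other than $u$. None of the hypotheses (a)--(d) excludes this; $N_G(u)$ may be a clique.

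Your (A3) shortcut is also backwards. One in-edge and one out-edge at $u$ do not prevent a common directed cycle: $\{au,ub,qr\}$ lies on $a\to u\to b\to q\to r\to a$ when $bq,ra\in E(G)$. What forbids a common directed cycle is two edges with the \emph{same} head or the \emph{same} tail. The fallback ``pairwise far apart, so any cycle has length at least $7$'' is unavailable in (c) and (d), where every vertex involved is within distance $2$ of $u$.

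\textbf{The missing idea.} All of the paper's sequences follow one rule: never have an in-edge and an out-edge at $u$ in the same triple. First trade edges at $u$ for two auxiliary edges sharing a tail or a head. Then, in a triple containing exactly one edge at $u$, replace that in-edge by an out-edge, and unwind.
\begin{itemize}
\item In (a): $\{au,bu,cu\}\to\{au,bu,qr\}\to\{au,qc,qr\}\to\{uc,qc,qr\}\to\{ua,uc,qr\}\to\{ua,ub,uc\}$.
\item In (b): the key step is $\{qa,cu,cr\}\to\{ua,qa,cr\}$.
\item In (d), starting from $\{au,bu,pu\}$: the key step is $\{bq,pu,pa\}\to\{uq,bq,pa\}$.
\item In (c): split according to whether $wb$ and $wc$ are edges. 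The edge $ac$ is used only when both are, routing through $\{ca,cu,wa\}\to\{ca,ua,wa\}$.
\end{itemize}

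With this design, every intermediate triple has two edges with a common initial or common terminal vertex. That makes (A3) automatic and disposes of those pairs in (A2). Each remaining cross pair is then excluded by a single non-adjacency: $uq,ur,uw\notin E(G)$, $qr\notin E(G)$, $pq\notin E(G)$, or $wb\notin E(G)$ (resp.\ $wc\notin E(G)$) in the subcases of (c). Until you exhibit such sequences and check them triple by triple, the argument remains a heuristic.
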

\begin{figure}[htbp]
\centering
\begin{tikzpicture}[scale=0.8,  vertex/.style={circle, draw, thick, inner sep=1.2pt, minimum size=18pt},  edge/.style={thick},  nonedge/.style={thick, dashed},  lab/.style={font=\small}]
%%%%%%%%%%%%%%%%%%%%%%%%%%%%%%%%%%%%%%%%%%%%%%%%%%
% (a)
%%%%%%%%%%%%%%%%%%%%%%%%%%%%%%%%%%%%%%%%%%%%%%%%%%
\begin{scope}[xshift=3.2cm,yshift=0cm]
  \node[lab] at (0,2.9) {(a)};
  \node[vertex] (u) at (0,1.9) {$u$};
  \node[vertex] (a) at (-2.0,0.3) {$a$};
  \node[vertex] (b) at (0,0.3) {$b$};
  \node[vertex] (c) at (1.8,0.3) {$c$};
  \node[vertex] (q) at (1.8,-1.0) {$q$};
  \node[vertex] (r) at (-1.3,-1.0) {$r$};

  \draw[edge] (u)--(a);
  \draw[edge] (u)--(b);
  \draw[edge] (u)--(c);
  \draw[edge] (q)--(c);
  \draw[edge] (q)--(r);

  \draw[nonedge] (u)--(q);
  \draw[nonedge] (u)--(r);
\end{scope}
%%%%%%%%%%%%%%%%%%%%%%%%%%%%%%%%%%%%%%%%%%%%%%%%%%
% (b)
%%%%%%%%%%%%%%%%%%%%%%%%%%%%%%%%%%%%%%%%%%%%%%%%%%
\begin{scope}[xshift=11.2cm,yshift=0cm]
  \node[lab] at (0,2.8) {(b)};
  \node[vertex] (u) at (0,1.6) {$u$};
  \node[vertex] (a) at (-1.8,0.2) {$a$};
  \node[vertex] (b) at (0,0.2) {$b$};
  \node[vertex] (c) at (1.8,0.2) {$c$};
  \node[vertex] (q) at (-1.8,-1.2) {$q$};
  \node[vertex] (r) at (1.8,-1.2) {$r$};

  \draw[edge] (u)--(a);
  \draw[edge] (u)--(b);
  \draw[edge] (u)--(c);
  \draw[edge] (q)--(a);
  \draw[edge] (r)--(c);

  \draw[nonedge] (q)--(r);
  \draw[nonedge] (u)--(q);
  \draw[nonedge] (u)--(r);
\end{scope}
%%%%%%%%%%%%%%%%%%%%%%%%%%%%%%%%%%%%%%%%%%%%%%%%%%
% (c)
%%%%%%%%%%%%%%%%%%%%%%%%%%%%%%%%%%%%%%%%%%%%%%%%%%
\begin{scope}[xshift=3.2cm,yshift=-5.3cm]
  \node[lab] at (0,3.0) {(c)};
  \node[vertex] (u) at (0,2.0) {$u$};
  \node[vertex] (a) at (-2.0,0.8) {$a$};
  \node[vertex] (b) at (0,0.2) {$b$};
  \node[vertex] (c) at (2.0,1.0) {$c$};
  \node[vertex] (w) at (-2.0,-1.2) {$w$};

  \draw[edge] (u)--(a);
  \draw[edge] (u)--(b);
  \draw[edge] (u)--(c);
  \draw[edge] (w)--(a);
  \draw[edge] (w)--(b);
  \draw[edge] (a)--(c);

  \draw[nonedge] (u)--(w);

% \node[lab] at (0,-2.0) {$a,b\in N_G(w)\cap N_G(u),\ ac\in E(G)$};
\end{scope}
%%%%%%%%%%%%%%%%%%%%%%%%%%%%%%%%%%%%%%%%%%%%%%%%%%
% (d)
%%%%%%%%%%%%%%%%%%%%%%%%%%%%%%%%%%%%%%%%%%%%%%%%%%
\begin{scope}[xshift=11.2cm,yshift=-5.3cm]
  \node[lab] at (0,2.8) {(d)};
  \node[vertex] (u) at (0,1.6) {$u$};
  \node[vertex] (p) at (-2.0,0.1) {$p$};
  \node[vertex] (a) at (-0.7,-1.1) {$a$};
  \node[vertex] (b) at (0.7,-1.1) {$b$};
  \node[vertex] (q) at (2.0,0.1) {$q$};

  \draw[edge] (u)--(p);
  \draw[edge] (u)--(a);
  \draw[edge] (u)--(b);
  \draw[edge] (u)--(q);
  \draw[edge] (p)--(a);
  \draw[edge] (q)--(b);

  \draw[nonedge] (p)--(q);
\end{scope}
\end{tikzpicture}
\caption{The four local configurations in Lemma~\ref{lem:local-switches}.
Solid edges indicate edges of $G$, and dashed edges indicate non-edges.}\label{fig:local}
\end{figure}
\begin{proof}
{\bf (a)}: Choose distinct $a,b\in N_G(u)\setminus\{c\}$. Since $q,r\notin N_G[u]$, we have $uq,ur\notin E(G)$.
Consider the sequence  
\[\{au,bu,cu\}\to \{au,bu,qr\}\to \{au,qc,qr\}\to \{uc,qc,qr\}\to \{ua,uc,qr\}\to \{ua,ub,uc\}.\]
Each step changes exactly one directed edge.

We check that all triples are valid. The first and last triples are stars, and hence they are valid. 
For the intermediate triples, it is enough to check (A2), since (A1) is immediate and (A3) is excluded because each intermediate triple contains two directed edges with a common initial vertex or a common terminal vertex. 
For the triple $\{au,bu,qr\}$, the pair $au,bu$ has a common terminal vertex $u$. 
The pairs $bu, qr$ and $au, qr$ would each require the edge $uq$ in a directed $4$-cycle, which is impossible.  
For the triple $\{au,qc,qr\}$, the pair $qc,qr$ has a common initial vertex $q$. 
The pairs $au,qc$ and $au,qr$ would again each require the edge $uq$ in a directed $4$-cycle, which is impossible.  
For the triple $\{uc,qc,qr\}$, the pair $qc,qr$ has a common initial vertex $q$, and the pair $uc,qc$ has a common terminal vertex $c$. 
The remaining pair $uc,qr$ would require the edge $ur$ in a directed $4$-cycle, which is impossible.  
For the triple $\{ua,uc,qr\}$, the pair $ua,uc$ has a common initial vertex $u$.
The pairs $ua,qr$ and $uc,qr$ would again each require the edge $ur$ in a directed $4$-cycle, which is impossible.  
Thus, every triple in the displayed sequence satisfies (A1), (A2) and (A3). Therefore, this is a valid transition, and $u$ is good. 

The validity of the transitions in cases {\rm (b)}--{\rm (d)} can be verified in the same way as in {\rm (a)}.

\noindent
{\bf (b)}: Choose $b\in N_G(u)\setminus\{a,c\}$.  A valid transition is given as follows: 
\[\{au,bu,cu\}\to \{qa,bu,cu\}\to \{qa,cu,cr\}\to \{ua,qa,cr\}\to \{ua,ub,cr\}\to \{ua,ub,uc\}.\]

\noindent
{\bf (c)}: 
Choose $a\in N_G(w)\cap N_G(u)$ and $c\in N_G(u)$ with $ac\in E(G)$. 
Choose $b\in N_G(u)\setminus\{a,c\}$ so that \[wb\in E(G)\quad\text{or}\quad wc\in E(G).\]
We give an explicit valid transition. There are three cases.

If $wc\in E(G)$ and $wb\notin E(G)$, use
\[\{au,bu,cu\}\to\{bu,cu,wa\}\to\{bu,wa,wc\}\to\{ub,wa,wc\}\to\{ua,ub,wa\}\to\{ua,ub,uc\}.\]
If $wb\in E(G)$ and $wc\notin E(G)$, use
\[\{au,bu,cu\}\to\{bu,cu,wa\}\to\{cu,wa,wb\}\to\{uc,wa,wb\}\to\{ua,uc,wa\}\to\{ua,ub,uc\}.\]
Finally, if $wb,wc\in E(G)$, use
\begin{align*}\begin{aligned}
\{au,bu,cu\}&\to\{bu,cu,wa\}\to\{ca,cu,wa\}\to\{ca,ua,wa\}\\
&\to\{ca,cw,ua\}\to\{cw,ua,ub\}\to\{ua,ub,uc\}.\end{aligned}\end{align*}

\noindent
{\bf (d)}: A valid transition is as follows: 
\[\begin{aligned}
\{au,bu,pu\}&\to \{bu,bq,pu\}\to \{bq,pu,pa\}\to \{uq,bq,pa\}\\
&\to \{ua,uq,pa\}\to \{ua,ub,uq\}\to \{ua,ub,up\}.\end{aligned}\]
\end{proof}

\begin{proof}[Proof of Proposition~\ref{prop:transition-classification}]
Choose a vertex $u$ of degree at least $3$. Then $u$ is not good. 
We divide our discussion into two cases: $V(G)\setminus N_G[u]\ne\emptyset$ or $V(G) = N_G[u]$. 

\medskip

\noindent
($V(G)\setminus N_G[u]\ne\emptyset$): In this case, we show that $V(G)\setminus N_G[u]$ is independent. 
Suppose for contradiction that the subgraph induced by $V(G)\setminus N_G[u]$ contains an edge. 
Let $C$ be a connected component of this induced subgraph containing an edge. Since $G$ is connected, $C$ has a neighbour in $N_G(u)$. 
Choose a vertex $q\in V(C)$ adjacent to some $c\in N_G(u)$. Since $C$ contains an edge and is connected, we may choose such $q$ with a neighbour $r\in V(C)$. 
Then \[q,r\in V(G)\setminus N_G[u],\qquad qr\in E(G),\qquad qc\in E(G).\]
By Lemma~\ref{lem:local-switches} (a), the vertex $u$ is good, a contradiction. Hence $V(G)\setminus N_G[u]$ is independent.

Now, let $w\in V(G)\setminus N_G[u]$. Since $V(G)\setminus N_G[u]$ is independent, all neighbours of $w$ lie in $N_G(u)$. 
Moreover, $w$ must have at least two neighbours in $N_G(u)$; otherwise, its unique neighbour in $N_G(u)$ would be a cut vertex separating $w$ from the remaining part of $G$, 
a contradiction to the $2$-connectedness of $G$. 

We next show that $V(G)\setminus N_G[u]$ consists of a single vertex. 
Suppose for contradiction that there are two distinct vertices \[q,r\in V(G)\setminus N_G[u].\] 
Since $V(G)\setminus N_G[u]$ is independent, we have $qr\notin E(G)$ and both $q$ and $r$ have at least two neighbours in $N_G(u)$, 
so we can choose distinct vertices $a,c\in N_G(u)$ such that \[qa\in E(G),\qquad rc\in E(G).\]
Then Lemma~\ref{lem:local-switches} (b) implies that $u$ is good, again a contradiction. Therefore, \[V(G)\setminus N_G[u]=\{w\}\] 
for some vertex $w$. 

We claim that every vertex in $N_G(w)\cap N_G(u)$ has no neighbour inside $N_G(u)$. 
Since $w\in V(G)\setminus N_G[u]$ and $G$ is $2$-connected, the set $N_G(w)\cap N_G(u)$ has at least two vertices. 
If some $a\in N_G(w)\cap N_G(u)$ had a neighbour in $N_G(u)$, then Lemma~\ref{lem:local-switches} (c) would imply that $u$ is good, a contradiction. 
Hence, every vertex in $N_G(w)\cap N_G(u)$ has no neighbour inside $N_G(u)$. 

We show that \[N_G(u) \subset N_G(w).\] 
On the contrary, if there exists a vertex $z\in N_G(u)\setminus N_G(w)$, then the set
\[\{w\}\cup \bigl(N_G(w)\cap N_G(u)\bigr)\] has no edge to \[N_G(u)\setminus N_G(w).\]
Indeed, $w$ has no neighbour in $N_G(u)\setminus N_G(w)$ by definition, and the vertices in $N_G(w)\cap N_G(u)$ have no neighbours inside $N_G(u)$. 
Hence, $u$ is a cut vertex, a contradiction. Therefore, \[N_G(w)\cap N_G(u)=N_G(u).\]
It follows that no two vertices in $N_G(u)$ are adjacent. Thus, $u$ and $w$ are both adjacent to every vertex in $N_G(u)$. Moreover, $uw\notin E(G)$, 
and there are no edges among the vertices of $N_G(u)$. Hence, \[G\cong K_{2,n-2}.\]

\medskip

\noindent
($V(G) = N_G[u]$): In this case, we have $V(G)=\{u\}\sqcup N_G(u)$. 
Since $G$ is $2$-connected, the graph $G - u$ is connected. 
If every two distinct vertices in $N_G(u)$ are adjacent, then $G\cong K_n$. 

Assume otherwise. Choose a non-edge $pq\notin E(G)$ with $p,q\in N_G(u)$. 
Then both $p$ and $q$ have neighbours in $N_G(u)$ by the connectedness of $G - u$. 
If there are \[a\in N_G(u)\quad\text{with }pa\in E(G),\qquad b\in N_G(u)\quad\text{with }qb\in E(G) \;\;\text{and}\;\; a\ne b, \]
then Lemma~\ref{lem:local-switches} (d) would imply that $u$ is good, a contradiction. 
Therefore, all neighbours of $p$ inside $N_G(u)$ and all neighbours of $q$ inside $N_G(u)$ coincide with one single vertex. 
Thus, there exists $w\in N_G(u)$ such that \[N_G(p)\cap N_G(u)=N_G(q)\cap N_G(u)=\{w\}.\]

We show that the graph induced on $N_G(u)$ is a star with center $w$. Let \[s\in N_G(u)\setminus\{w\}.\]
If $s=p$, then \[N_G(s)\cap N_G(u)=\{w\}.\]
If $s\ne p,w$, then $ps\notin E(G)$, because $p$ has no neighbour in $N_G(u)$ other than $w$. 
By using the non-edge $ps$ together with $N_G(p)\cap N_G(u)=\{w\}$, 
we obtain \[N_G(s)\cap N_G(u)=\{w\}.\]
Thus, every vertex in $N_G(u)\setminus\{w\}$ is adjacent only to $w$ within $N_G(u)$. 
Hence, the graph induced on $N_G(u)$ is a star with center $w$. 

Therefore, $u$ and $w$ are adjacent, and all remaining $n-2$ vertices are adjacent exactly to $u$ and $w$. 
Hence, \[G\cong K_{1,1,n-2}.\]
This completes the proof.
\end{proof}

\bigskip
\section{The exceptional families}\label{sec:exceptional}

In this section, we record explicit nontrivial Minkowski decompositions for the three exceptional families $K_n$, $K_{2,n-2}$ and $K_{1,1,n-2}$. 

\noindent
($K_n$): Let \[\Delta_{n-1}=\conv\{\eb_1,\ldots,\eb_n\}\] be the standard simplex. 
Then \[P_{K_n}^\pm=\Delta_{n-1}+(-\Delta_{n-1}). \]

\noindent
($K_{2,n-2}$): Let $G=K_{2,n-2}$ with parts $\{1,2\}$ and $\{3,\ldots,n\}$. 
Then \[P_{K_{2,n-2}}^{\pm}=\conv\{\eb_1-\eb_i,\ \eb_i-\eb_2:3\le i\le n\}+\conv\{{\bf 0},\eb_2-\eb_1\}.\]

\noindent
($K_{1,1,n-2}$): Let $G=K_{1,1,n-2}$ with parts $\{1\}$, $\{2\}$ and $\{3,\ldots,n\}$. 
Then \[P_{K_{1,1,n-2}}^{\pm}=\conv\{{\bf 0},\eb_1-\eb_2,\eb_1-\eb_i,\eb_i-\eb_2:3\le i\le n\}+\conv\{{\bf 0},\eb_2-\eb_1\}. \]

\medskip

In all three cases, the displayed decomposition is nontrivial. 
Therefore, each graph in the three exceptional families gives a Minkowski decomposable symmetric edge polytope.

\bigskip
\section{Proof of the main theorem for $2$-connected graphs}\label{sec:proof-main}

Now, we are ready to give a proof of the main theorem. 
When $n=3$, the connected graphs are $K_3$ and $K_{2,1}$, both of which belong to the exceptional families. Hence, we may assume that $n\ge4$.
If $G$ is connected but not $2$-connected, then $P_G^\pm$ is Minkowski indecomposable by Corollary~\ref{cor:2-conn}. 
Hence, in what follows, let $G$ be $2$-connected. 
Note that the only $2$-connected graphs on four vertices are $K_4$, $K_{2,2}$, and $K_{1,1,2}$, all of which belong to the exceptional families.
Therefore, in what follows, we may also assume that $n\ge 5$.

\begin{proof}[Proof of Theorem~\ref{thm:main}]
{\bf (Only if)}: 
Suppose that $G$ is 2-connected and is not isomorphic to any of $K_n$, $K_{2,n-2}$, and $K_{1,1,n-2}$.
If $G$ is a cycle, then Proposition~\ref{prop:cycle} shows that $P_G^{\pm}$ is Minkowski indecomposable. 
Hence, assume that $G$ is not a cycle. Then $G$ has at least two vertices of degree at least $3$. 
By Proposition~\ref{prop:transition-classification}, $G$ has a good vertex $u$. 
Choose distinct $a,b,c\in N_G(u)$ and a transition \[\{au,bu,cu\}=T_0\to T_1\to\cdots\to T_q=\{ua,ub,uc\}.\]
Let
\[\Fc^-:=\{\{xu,yu,zu\}:x,y,z\in N_G(u) \text{ are distinct}\} \text{ and}\]
\[\Fc^+:=\{\{ux,uy,uz\}:x,y,z\in N_G(u) \text{ are distinct}\}.\]
Set
\[\Fc:=\{T_0,\ldots,T_q\}\cup\Fc^-\cup\Fc^+.\]
Both $\Fc^-$ and $\Fc^+$ are connected in the adjacency graph, and
$T_0\in\Fc^-$ and $T_q\in\Fc^+$. 
Thus, the transition
$T_0\to\cdots\to T_q$ shows that $\Fc$ is connected.
Hence, $\rho(\Fc)$ is a strongly connected family of triangular faces. 

It remains to see that $\rho(\Fc)$ touches every facet of $P_G^{\pm}$. 
By the facet-subgraph description of symmetric edge polytopes \cite[Theorem~3(2)]{CDK}, 
the directed edges corresponding to the vertices of any facet form a connected spanning subgraph of $G$. 
In particular, every facet contains some vertex $\rho(e)$ with $e$ incident with $u$. 
Every such directed edge belongs to some member of $\Fc^-\cup\Fc^+$.
Hence, $\rho(\Fc)$ touches every facet. 
By Theorem~\ref{thm:mcmullen}, $P_G^{\pm}$ is Minkowski indecomposable. 

\noindent
{\bf (If)}: The decomposability of the three exceptional families is given in Section~\ref{sec:exceptional}. 
\end{proof}

\end{document}